\title{Organizing Physics with Open Energy-Driven Systems}
\author{
  Matteo Capucci
  \institute{University of Strathclyde}
  \email{matteo.capucci@gmail.com}
  \and
  Owen Lynch
  \institute{Topos Institute}
  \email{owen@topos.institute}
  \and
  David I.\ Spivak%
  \thanks{Spivak and Lynch's contribution was supported by the Air Force Office of Scientific Research under award number FA9550-23-1-0376.}
  \institute{Topos Institute}
  \email{david@topos.institute}
}
\date{}
\newcommand{\titlerunning}{Organizing Physics with Open Energy-Driven Systems}
\newcommand{\authorrunning}{M. Capucci, O. Lynch \& D.I. Spivak}
\theoremstyle{plain}
\newtheorem{theorem}{Theorem}
\newtheorem{proposition}[theorem]{Proposition}
\newtheorem{remark}[theorem]{Remark}
\theoremstyle{definition}
\newtheorem{construction}[theorem]{Construction}
\newtheorem{definition}[theorem]{Definition}
\newtheorem{example}[theorem]{Example}
\renewcommand*{\to}[1][]{\xrightarrow{#1}}
\newcommand*{\cat}[1]{\mathbf{#1}}
\newcommand*{\dbl}[1]{\mathbb{#1}}
\newcommand{\dblcat}[1]{\cat{\dbl #1}}
\newcommand*{\then}{\fatsemi}
\newcommand*{\id}{1}
\newcommand{\R}{\mathbb{R}}
\newcommand*{\Cat}{\dblcat{Cat}}
\newcommand*{\COrg}{\cat{COrg}}
\newcommand*{\Para}{\cat{Para}}
\newcommand*{\Set}{\cat{Set}}
\newcommand{\bigarena}[2]{
     \begin{pmatrix}{\vphantom{f_f^f}#2} \\ {\vphantom{f_f^f}#1} \end{pmatrix}
}
\newcommand{\littlearena}[2]{
     \begin{psmallmatrix}{\vphantom{f}#2} \\ {\vphantom{f}#1} \end{psmallmatrix}
}
\newcommand{\arena}[2]{
  \relax\if@display
     \bigarena{#1}{#2}
  \else
     \littlearena{#1}{#2}
  \fi
}
\newcommand{\lens}[2]{\arena{#2}{#1}}
\newcommand{\To}[1]{\xrightarrow{#1}}
\newcommand{\longto}{\longrightarrow}
\newcommand{\lensto}{\leftrightarrows}
\renewcommand*{\real}{\mathbb{R}}
\newenvironment{eqalign}{\begin{equation}\begin{aligned}}{\end{aligned}\end{equation}}
\newenvironment{eqalign*}{\begin{equation*}\begin{aligned}}{\end{aligned}\end{equation*}}
\tikzset{
  relation/.style={
    draw=none,
    every to/.append style={
      edge node={node [sloped, allow upside down, auto=false]{$#1$}}}
  }
}
\mathchardef\dash="2D
\newcommand{\epito}{\twoheadrightarrow}
\newcommand{\nlongto}[1]{\xrightarrow{\;#1\;}}
\newcommand{\nisoto}[1]{\xrightarrow[\sim]{#1}}
\newcommand{\nisofrom}[1]{\xleftarrow[\sim]{#1}}
\newcommand{\isofrom}[1]{\nisofrom{}}
\newcommand{\isoto}[1]{\nisoto{}}
\newcommand{\iso}[1][]{\overset{#1}{\cong}}
\newcommand{\Smooth}{\cat{Mfd}}
\newcommand{\Smoothiso}{\Smooth^{\textnormal{iso}}}
\newcommand{\Org}{\cat{Org}}
\newcommand{\dblOrg}{\dblcat{Org}}
\newcommand{\systh}{\mathbf}
\newcommand{\Sys}{\systh{Sys}}
\newcommand{\Lens}{\cat{Lens}}
\newcommand{\ODE}{\mathbf{ODE}}
\newcommand{\Reactions}{\mathbf{React}}
\newcommand{\Subm}{\mathbf{Subm}}
\newcommand{\Poly}{\mathbf{Poly}}
\newcommand{\Coalg}{\mathbf{Coalg}}
\newcommand{\OpenReact}{\cat{OpenReact}}
\newcommand{\OpenPot}{\cat{OpenErg}}
\newcommand{\IntReact}{{\int\Reactions}}
\newcommand{\OpenODE}{\cat{OpenODE}}
\newcommand{\ActionODE}{\otimes_\ODE}
\newcommand{\ActionT}{\otimes_T}
\renewcommand{\op}{{\mathsf{op}}}
\newcommand{\SmoothLens}{{\Smooth\Lens}}
\tikzstyle{midarrow}=[decoration={
\tikzstyle{midarrowflip}=[decoration={
\begin{document}

\maketitle

\begin{abstract}
    Organizing physics has been a long-standing preoccupation of applied category theory, going back at least to Lawvere. We contribute to this research thread by noticing that Hamiltonian mechanics and gradient descent depend crucially on a consistent choice of transformation---which we call a reaction structure---from the cotangent bundle to the tangent bundle. We then construct a compositional theory of reaction structures. Reaction-based systems offer a different perspective on composition in physics than port-Hamiltonian systems or open classical mechanics, in that reaction-based composition does not create any new constraints that must be solved for algebraically.

    The technical contributions of this paper are the development of symmetric monoidal categories of open energy-driven systems and open differential equations, and a functor between them, functioning as a ``functorial semantics'' for reaction structures. This approach echoes what has previously been done for open games and open gradient-based learners, and in fact subsumes the latter. We then illustrate our theory by constructing an $n$-fold pendulum as a composite of $n$-many pendula.
\end{abstract}

\section{Introduction}

A long-standing goal of applied category theory is to provide compositional frameworks for physical systems \cite{lawvere_Toward_1980}. In this paper, we introduce a compositional framework for a generalization of both systems based on Hamiltonian mechanics and general systems which perform gradient descent. In this framework, systems interact by sending each other gradients. These gradients are converted to motion (that is, tangent vectors) by what we call a \emph{reaction}. In Hamiltonian dynamics, the reaction arises from the symplectic structure and in gradient descent, the reaction arises from a Riemannian structure.

In~\cref{sec:intuition} we explain the basic intuition: at the core, Hamiltonian dynamics and gradient descent on a manifold $X$ both depend on a map $R\colon T^*X\to TX$, which translates gradients into motion. One can imagine this as a functional version of argmax, in that given a ``valuation'' function $\varphi \colon T_x X\to\real$, one obtains a choice of element $R(\varphi) \in T_x X$. But whereas gradient descent really does choose a tangent direction that maximizes change along the gradient, Hamiltonian dynamics does almost the opposite, choosing a tangent direction with zero change along the gradient.

Our compositional framework is bidirectional but oriented. This means that some parts of an open system are deemed inputs whereas other parts are deemed outputs. For example, in a pendulum, the input is the position and momentum of the pivot point, and the output is the position and momentum of the bob, which another system may take as input. However, the system also \emph{receives} gradients on its output, and \emph{sends} gradients to its input, which can be though as the result of external forces. It also keeps and updates an internal state. This can be pictured in the following way.

\[
  \begin{tikzpicture}
  \tikzstyle{box}=[draw, fill=blue!20, minimum width=2cm, minimum height=1.3cm, align=center]

  \node[box] (main) {System};

  \draw[midarrow] ([yshift=0.25cm]main.west) -- ++(-1,0) node[left] {gradient};
  \draw[midarrowflip] ([yshift=-0.25cm]main.west) -- ++(-1,0) node[left] {parameters};
  \draw[midarrowflip] ([yshift=0.25cm]main.east) -- ++(1,0) node[right] {gradient};
  \draw[midarrow] ([yshift=-0.25cm]main.east) -- ++(1,0) node[right] {output};
  \draw[midarrow] ([xshift=-0.5cm]main.north) to [out=90,in=180] (0,1.25cm)  node[above] {state} to [out=0,in=90] ([xshift=0.5cm]main.north);
  \end{tikzpicture}
\]

We begin in~\cref{sec:intuition} by reviewing Hamiltonian and gradient descent systems and their commonalities, motivating the ensuing developments.
Then we define symmetric monoidal category of open energy-driven systems in~\cref{sec:open-pot-cat}; there the bidirectionality is not explicit, but instead there is an explicit choice of reaction $R$. In~\cref{sec:semantics} we define a functor that assigns these systems a bidirectional semantics in terms of parametric lenses. We do so in two steps, the first of which is more likely generalizable and the second of which is conceptually simpler. These semantics land in lens categories, which are now well-known in categorical machine learning literature. In fact our semantics in inspired by~\cite{fong_Backprop_2019, cruttwell_Categorical_2022}, which we extend to general smooth manifolds.

Finally, in~\cref{sec:examples} we give one example of these ideas, that of an $n$-fold pendulum constructed by composing single pendula.
We conclude with an epilogue in~\cref{sec:epilogue}.

\subsection{Related work}

This framework is connected to a variety of other attempts to formalize physical systems within applied category theory.

\begin{itemize}[nosep]
  \item The title of this paper is a pun based on the double category $\dblOrg$ that was first developed by the third-named author in \cite{spivak_Learners_2022,shapiro_Dynamic_2023}. One way of thinking about this paper is that it develops a ``continuous version of $\dblOrg$.''
  \item Another approach to ``systems exerting force on each other'' is port-Hamiltonian systems, which has been developed categorically by the second-named author in \cite{lynch_Relational_2022}, \cite{lohmayer_Exergetic_2024}. However, the doctrine of composition developed for port-Hamiltonian systems is undirected and relational, and so a computer implementation of this composition would require solving differential-algebraic equations in a similar way to \cite{ma_ModelingToolkit_2021} rather than just differential equations. In contrast, the framework in the current paper gives an ``input-output'' view on physical systems, and thus composition does not introduce new constraints that must be solved for. In future work, we hope to give an account of the relationship between the directed and undirected accounts of composition.
  \item Classical mechanics has been previously treated from a category theoretic viewpoint via spans of symplectic manifolds \cite{baez_Open_2021}. However, like port-Hamiltonian systems, this approach is essentially relational, necessitating semantics in differential-algebraic equations.
  \item We heavily rely on the Para construction and its functoriality, as well as using roughly the same techniques of~\cite{capucci_Towards_2022} for constructing our symmetric monoidal categories of open systems.
  \item The pattern of having a simple description of feedback systems that then gets compiled down to parametric lenses of some sort follows what~\cite{capucci_Diegetic_2023} has shown for open games. The analogy with gradient-based systems, formulated in terms of changes and valuations, already appears in \emph{ibid.}
  \item Resource sharing machines are another method of composing dynamical systems which has been applied to dynamical systems for physics \cite{libkind_Operadic_2022}. In contrast with the present approach, resource sharing machines do not derive their dynamics from potentials and forces; rather a resource sharing machine takes the vector field as primitive. However, we hope that in the future ``resource-sharing composition'' will be available to use with the formalism of this paper.
\end{itemize}

\subsection{Notational conventions}
For composition, we use $\then$ to denote diagrammatic order.
For a vector bundle $\pi\colon E\to B$, we denote the set of global sections by $\Gamma(\pi)\coloneqq\{s\colon B\to E\mid s\then\pi=\id\}$. Given another vector bundle $\pi'\colon E'\to B$, we denote the vector bundle (over $B$) of fiberwise linear maps between them by $[\pi,\pi']$.

We denote by $(\Smooth,\real^0,\times)$ the cartesian monoidal category of smooth manifolds and smooth maps between them, although everything we're saying works in well-behaved generalizations, such as diffeological spaces \cite{souriau_Groupes_2006}. We denote the circle by $S^1\in\Smooth$. The wide subcategory of smooth manifolds and isomorphisms between them is denoted $\Smoothiso$; from which it inherits the (no longer cartesian) monoidal structure $(\real^0,\times)$.

\subsection{Prerequisites}
In order for the framework of this paper to be comprehensible, a certain amount of context must be given, but also a certain amount of context must be omitted for brevity.

The context that we omit and we assume the reader to be already acquainted with is basic differential geometry (definition and strong monoidal functoriality of tangent and cotangent bundles), and especially the theory of vector bundles (we reference~\cite{kolar_Natural_1993}).
We also expect the reader to be familiar with the $\Para$ construction (see~\cite{fong_Backprop_2019,capucci_Towards_2022,cruttwell_Categorical_2022}) and its functorial properties, but we nonetheless spell out the result when we invoke it.

It will also help the reader to be familiar with Hamiltonian mechanics and gradient descent, but we will briefly review these so that a sufficiently determined reader may get through this paper without too much prior experience.

Lastly, while we kept this work strictly in the land of 1-categories for brevity, we use ideas from categorical systems theory and think of our constructions as shadows of their essentially double-categorical nature.
So being aware of~\cite{myers_Categorical_2023} is not required (except in passages where we explictly draw a connection) but might help understanding the subtext.

\section{Intuition}\label{sec:intuition}

Hamiltonian mechanics and gradient descent have a common mathematical structure. We start by reviewing them.

\subsection{Hamiltonian mechanics} \label{sec:hamiltonian_mechanics}
A Hamiltonian system consists of a state space $X \in \Smooth$ with a full-rank symplectic form $\omega \in \Gamma(T^\ast X \wedge T^\ast X)$ and a function $H \colon X \to \real$, called the \emph{Hamiltonian}, which represents the system's energy at any $x\in X$. Applying $\omega$ and the sequence of maps
\[ T^\ast X \wedge T^\ast X \to T^\ast X \otimes T^\ast X \cong [TX, T^\ast X] \]
we produce a section $K_\omega \in \Gamma[TX, T^\ast X]$, i.e.~a linear map $TX\to T^\ast X$ over $X$, which is invertible because $\omega$ is full-rank. Call its inverse $J_\omega \in \Gamma[T^\ast X, TX]$; we refer to it as the \emph{reaction} associated to $\omega$. Then the dynamics of the system $(X, \omega, H)$ are given by Hamilton's equation, i.e.~the differential equation
\begin{equation}\label{eq:hamiltons}
  {\dv{x}{t}}(t) = J_\omega(x(t)) \; \dd H(x(t)).
\end{equation}

Note that the dynamics only depend on the reaction $J_\omega$ rather than $\omega$; we started with the symplectic form to connect to the more conventional way of doing things, but we will almost exclusively only work with reactions in the future.

\begin{remark}
  Notice how we only need $\omega$ to be non-degenerate because we want to take the inverse of $K_{\omega}$. However, if we had just started from $J_\omega$, we need not assume that $J_\omega$ is invertible.
  In fact we get a reaction also from a \emph{Poisson structure}~\cite{crainic_lectures_2021}, which boils down to a possibly-degenerate $J_{\omega}$.
  These can be used to model Hamiltonian systems with odd dimension.
\end{remark}

For any smooth manifold $M\in\Smooth$ of dimension $n$, there is a canonical symplectic structure on its cotangent space $X\coloneqq T^*M$. Given a coordinate chart for $M$ and the induced coordinate chart on $T^*M$ and $TM$, the corresponding reaction $J\colon T^*_xX\to T_xX$ over $x\in X$ has the following form
\begin{equation}\label{eq:reaction_cotan}
J(x)=\begin{bmatrix}0&I_n\\-I_n&0\end{bmatrix}
\end{equation}
where $I_n$ is the $(n\times n)$-identity matrix. Note that $J(x)$ is independent of the choice of coordinate chart \cite[Chapter~8]{arnold_Mathematical_2013}.

\begin{example} \label{ex:pendulum}
  Consider the pendulum of fixed length $l$ and mass $m$ shown here
  \begin{equation}\label{eq:pendulum}
    \scalebox{0.85}{\begin{tikzpicture}[baseline=(ell)]
      \node[inner sep = 2pt, fill, black, circle] (x0) at (0, 0) {};
      \node[inner sep = 4pt, fill, black, circle, label=below:$m$] (x1) at (300:3cm) {};
      \coordinate (xaxis) at (3, 0);
      \draw[thick] (x0) -- node[above, outer sep=3pt] (ell) {$l$} (x1);
      \draw[dotted] (x0) -- (xaxis);
      \draw (0.5, 0) arc (0:300:0.5cm);
      \draw[->] (x1) -- node[below, outer sep=3pt] {$v$} +(30:1cm);
      \node at (150:0.75cm) {$\theta$};
    \end{tikzpicture}}
  \end{equation}
  The little dot is called the \emph{pivot} and the big dot is called the \emph{bob}. We can model this as a Hamiltonian system by letting $X = T^\ast S^1$ with coordinates $(\theta, L)$ representing angle and angular momentum. Then \eqref{eq:reaction_cotan} becomes
    \[ J(x) = \begin{bmatrix} 0 & 1 \\ -1 & 0 \end{bmatrix} \]
  because $S^1$ is 1-dimensional. For a given map $H \colon T^\ast S^1 \to \real$,~\cref{eq:hamiltons} reads

  \begin{eqalign*}
    \dv{\theta}{t} = \phantom{-}\pdv{H}{L}, \quad \dv{L}{t} = -\pdv{H}{\theta}
  \end{eqalign*}

  To model a pendulum as in \eqref{eq:pendulum} with kinetic and gravitational energy, we would make the following Hamiltonian. First, we can compute several derived quantities from the variables $(\theta, L)$ of the system (the following quantities are all implicit functions of $(\theta, L)$).
  \begin{eqalign*}
    I &= m l^2 & \text{(moment of inertia)} \\
    \omega &= \frac{L}{I} & \text{(rotational velocity)} \\
    x &= l \begin{bmatrix} \cos \theta \\ \sin \theta \end{bmatrix} & \text{(position of the mass)} \\
    v &= l \omega \begin{bmatrix} -\sin \theta \\ \cos \theta \end{bmatrix} & \text{(velocity of the mass)} \\
    h &= x_2=l\sin\theta & \text{(height of the mass)}
  \end{eqalign*}
  The Hamiltonian is then the sum of kinetic and gravitational energy, written as
  \[ H(\theta, L) = \frac{1}{2}m \norm{v}^2 + m g h \]
  where $g$ is gravitational acceleration. Plugging this $H$ into~\cref{eq:hamiltons} gives

  \begin{eqalign*}
    \dv{\theta}{t} &= \frac{L}{I} \\
    \dv{L}{t} &= -mgl\cos(\theta)
  \end{eqalign*}
\end{example}

\begin{definition}\label{def.react}
  For any manifold $X\in\Smooth$, a \emph{reaction} on $X$ is a map $J\colon T^*X\to TX$ over $X$. The set of reactions on $X$ is denoted $\Reactions(X)$.
  Given a smooth map $f\colon X \to Y$, we denote by $\Reactions(f)\colon\Reactions(X)\to\Reactions(Y)$ the map sending $J$ to the composite $T^*Y \To{T^*f} T^*X \To{J} TX \To{Tf} TY$.
\end{definition}

\subsection{Gradient descent}
Gradient descent (or gradient ascent) is a very similar story. We start with a state space $X$ along with a Riemmanian metric $g \in \Gamma(T^\ast X \otimes T^\ast X)$ and a function $S \colon X \to \real$. We apply the exact same procedure to $g$ that we did to $\omega$ in order to get a section $M \in \Reactions(X)$, and then we get the equation

\[ v(x) = M(x) \; \dd S(x) \]

\begin{example}
  Let $X = \real^2$ with the Euclidean metric. Then gradient ascent for a function $S \colon X \to \real$ looks like

  \begin{equation*}
    \dv{x_1}{t} = \pdv{S}{x_1}, \quad \dv{x_2}{t} = \pdv{S}{x_2}
  \end{equation*}

  It is more traditional to write this as $\dv{x}{t} = \grad S(x)^T$.
  It may seem like this doesn't use any fancy Riemannian structure, but in fact transposing a row vector into a column vector is made possible using the isomorphism $(\real^n)^\ast \cong \real^n$, the same isomorphism on which the natural inner product for $\real^n$ is built.
\end{example}

From Hamiltonian system and gradient descent systems we can provide the following common generalization:

\begin{definition}
  An \emph{energy-driven system} consists of a \emph{state space} $X \in \Smooth$, a \emph{reaction} $R \in \Reactions(X)$, and an \emph{energy functional} $E \colon X \to \real$.
\end{definition}

The reaction embodies the ``laws of physics'', turning energies (given by $E$) into forces.

\section{Open Energy-driven Systems} \label{sec:open-pot-cat}
Most, if not all, systems are in practice \emph{open}, meaning they are amenable to composition with other systems.

\begin{example} \label{ex:open_pendulum}
  Take, for instance, the pendulum system from~\cref{ex:pendulum}.
  In there we considered the pivot to be a fixed point, but in practice both pivot and bob are physical locations at which other systems can be attached.
  This means one can consider the pendulum as parametrized by $A = T\real^2$, the phase space of the pivot, and to influence the location and velocity of the bob by exposing such quantities in $B = T\real^2$.
  Again, $X = T^\ast S^1$, but now, rather than $E \colon X \to \real$,  we have $E \colon A \times X \to \real$ defined by
  \[  E((x_0, v_0), (\theta, L)) = \frac{1}{2m} \norm{v_0 + v}^2 + mg(h_0 + h) \]
  This $E$ represents the potential plus kinetic energy of a pendulum whose pivot is at position $x_0$ and moving at velocity $v_0$.

  We then define $w \colon T\real^2 \times T^*S^1 \to T\real^2$ by
  \[ w((x_0, v_0), (\theta, L)) = (x_0 + x, v_0 + v) \]
  This gives the position and velocity of the bob.
\end{example}

The ``open pendulum'' we just described is but an example of a general definition:

\begin{definition}\label{def.open_ed_system}
  Given $A,B \in \Smooth$, an \emph{open energy-driven system with inputs $A$ and outputs $B$} consists of a manifold $X$, a reaction $R \in \Gamma[T^\ast X, TX]$, a function $E \colon A \times X \to \real$, and a function $w \colon A \times X \to B$. We call $A$ the \emph{inputs}, $X$ the \emph{state}, and $B$ the \emph{output}.
\end{definition}

The fact that $A = B$ in~\cref{ex:open_pendulum} is tantalizing, because it makes the pendulum something like a ``parametric endomorphism'' of $T\real^2$. Could we somehow ``compose the pendulum with itself'' to make a double pendulum? Also, how can we describe the differential equation attached to an open energy-driven system; i.e.~what is the semantics of an open energy-driven system? Answering these questions is the subject of the remainder of this paper.

\subsection{Composing Open Energy-driven Systems}

The purpose of this section is to develop a symmetric monoidal category where the morphisms are open energy-driven systems.

We begin with the following straightforward proposition.

\begin{proposition}\label{prop:reactions-fun}
  The functor $\Reactions\colon\Smoothiso\to\Set$ from~\cref{def.react} is lax symmetric monoidal, with unitor $1\to\Reactions(1)$ given by the unique element of $\Reactions(1)$ (the zero map $\real^0 \to \real^0$) and compositor
  \begin{equation}
    \oplus_{X_1,X_2} \colon \Reactions(X_1) \times \Reactions(X_2) \to \Reactions(X_1 \times X_2)
  \end{equation}
  defined by sending $R_1 \in \Reactions(X_1), R_2 \in \Reactions(X_2)$ to
  \begin{equation*}
   T_{(x_1,x_2)}^\ast (X_1{\times} X_2) \cong T^\ast_{x_1} X_1 \times T^\ast_{x_2} X_2 \To{R_1(x_1) \times R_2(x_2)} T_{x_1} X_1 \times T_{x_1} X_2 \cong T_{(x_1, x_2)}(X_1 {\times} X_2). \
   \end{equation*}
\end{proposition}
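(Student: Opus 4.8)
The plan is to take the unitor and compositor exactly as given in the statement and verify, in order, (i) that the compositor is well-defined, (ii) that it is natural in $(X_1,X_2)$, and (iii) that together with the unitor it satisfies the associativity, unitality and symmetry axioms of a symmetric lax monoidal functor. The entire argument rests on one standard fact from the theory of natural bundles \cite{kolar_Natural_1993}: the tangent and cotangent functors carry finite products to finite products via canonical bundle isomorphisms $T(X_1\times X_2)\cong TX_1\times TX_2$ and $T^*(X_1\times X_2)\cong T^*X_1\times T^*X_2$ over $X_1\times X_2$, and these isomorphisms are natural with respect to diffeomorphisms and coherent with the associator, unitors and symmetry of $(\Smooth,\real^0,\times)$. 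Conceptually one could write $\Reactions(X)=\Gamma[T^*X,TX]$ and read the claim off from the interaction of these product-preservation properties with the section functor and the fiberwise internal hom; but it is quicker to verify the handful of instances we actually need by hand.

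For well-definedness, the unitor is forced because $\Reactions(\real^0)$ is a singleton, its unique element being the zero map $\real^0\to\real^0$ over $\real^0$. For the compositor, the displayed composite is assembled from maps over $X_1\times X_2$: the two outer arrows are the canonical product isomorphisms, which live over $X_1\times X_2$, and the middle arrow $R_1(x_1)\times R_2(x_2)$ is a map over $X_1\times X_2$ because each $R_i$ is a map over $X_i$; hence $R_1\oplus R_2\in\Gamma[T^*(X_1\times X_2),T(X_1\times X_2)]=\Reactions(X_1\times X_2)$. Naturality then reduces to the observation that for diffeomorphisms $f_i\colon X_i\To{\iso}Y_i$ the bundle maps $T(f_1\times f_2)$ and $T^*(f_1\times f_2)$ are ``block-diagonal'', i.e.\ correspond to $Tf_1\times Tf_2$ and $T^*f_1\times T^*f_2$ under the canonical isomorphisms; this holds since $d(f_1\times f_2)=df_1\times df_2$ and the transpose of a product of linear maps is the product of their transposes. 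Chasing $R_1\times R_2$ around the naturality square for $\oplus$ then yields, pointwise, $(Tf_1\circ R_1\circ T^*f_1)\times(Tf_2\circ R_2\circ T^*f_2)$, which is exactly $\Reactions(f_1)(R_1)\oplus\Reactions(f_2)(R_2)$, as required.

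Finally, each coherence axiom reduces pointwise to the corresponding coherence of the canonical product isomorphisms of $T$ and $T^*$: for associativity, both composites $\Reactions(X_1)\times\Reactions(X_2)\times\Reactions(X_3)\to\Reactions(X_1\times X_2\times X_3)$ send $(R_1,R_2,R_3)$ to $R_1\times R_2\times R_3$ read through the iterated isomorphisms, which agree by coherence; for the unit axioms one uses that $T(\real^0\times X)\cong TX$ and $T^*(\real^0\times X)\cong T^*X$ carry $0\times R$ to $R$, together with $0$ being the unique element of $\Reactions(\real^0)$; and for symmetry, the canonical isomorphisms intertwine the swap on $\Smooth$ with the swaps on $T$ and $T^*$, so $R_2\oplus R_1$ transported along the swap diffeomorphism is $R_1\oplus R_2$. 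I expect the only real bookkeeping hazard to be keeping the variance of the cotangent functor straight — for a diffeomorphism $f$, the map $T^*f\colon T^*Y\to T^*X$ used in \cref{def.react} is the fiberwise transpose-inverse of $df$ — and confirming that the relevant canonical product isomorphisms are natural for such maps; but this is routine differential geometry with no genuine difficulty, and once it is in place all three paragraphs are short diagram chases.
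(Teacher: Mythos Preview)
Your proposal is correct. The paper itself does not supply a proof of this proposition at all: it is introduced as ``the following straightforward proposition'' and left without justification. Your verification of well-definedness, naturality, and the coherence axioms is exactly the routine check the authors are implicitly deferring to the reader, and your identification of the one potential pitfall (tracking the contravariance of $T^*$ on diffeomorphisms via the transpose-inverse of the differential) is apt.
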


\begin{definition}
  Let $\IntReact \To{\pi_\Reactions}\Smoothiso$ be the monoidal Grothendieck construction of $\Reactions$ (see \cite{moeller_Monoidal_2020}).
  An object of $\IntReact$ is a pair $(X,R)$ consisting of a space and a reaction on it, a morphism is a diffeomorphism which preserves the reactions, and the monoidal product $(X_1,R_1)\otimes(X_2,R_2)$ is given by $(X_1\times X_2,R_1\oplus R_2)$ as above.
\end{definition}

\begin{example}
  The symmetric monoidal category of symplectic manifolds and symplectomorphisms embeds faithfully into $\IntReact$ by sending a symplectic manifold $(X, \omega)$ to $(X, J_\omega)$, where $J_\omega$ is the reaction associated to $\omega$, as constructed in~\cref{sec:hamiltonian_mechanics}.
  A \emph{symplectomorphism} is a diffeomorphism that preserves the symplectic structure, and it is not too hard to show that preserving the symplectic structure implies preserving the reaction structure.
\end{example}

Now, as the composite $\IntReact \to[\pi_\Reactions] \Smoothiso \to[U] \Smooth$ is strong monoidal, we have a monoidal action of $\IntReact$ on $\Smooth$, simply given by $(X,R) \otimes_U Y\coloneqq X \times Y$.
The technique of decorating an action with data coming from a monoidal Grothendieck construction was introduced by~\cite{capucci_Towards_2022}.

We can then construct a category $\OpenReact$ by applying the Para construction \cite{fong_Backprop_2019,capucci_Towards_2022} to $\otimes_U$, and in fact we use this opportunity to recall how the latter is performed.

\begin{definition}
  Let $\OpenReact$ be the symmetric monoidal category of \emph{open reaction systems} defined as the local 0-truncation\footnote{Taking the \textbf{local 0-truncation} of a bicategory means we replace each hom-\emph{category} with the \emph{set} of its isomorphism classes.
  Notice this yields a well-defined strict 1-category since coherence isomorphisms are turned into equalities.} of $\Para(\otimes_U)$.
\end{definition}

Concretely, an object in $\OpenReact$ is a manifold, a 1-cell $A\to B$ is an open reaction system and is given by a pair $(X,R)\in\IntReact$ together with a smooth map $w\colon X\times A\to B$.
These 1-cells compose by accumulating parameters if $(X_1,R_1,w_1:A \times X_1 \to B)$ and $(X_2,R_2,w_2:B \times X_2 \to C)$ are composable 1-cells, their composite is:
\begin{equation}
  (X_1,R_1,w_1) \then (X_2,R_2,w_2) := (X_1 \times X_2, R_1 \oplus R_2, A \times X_1 \times X_2 \To{w_1 \times X_2} B \times X_2 \To{w_2} C).
\end{equation}
Morphisms $(X,R,w)$ and $(X',R',w')$ are considered equivalent if there is an isomorphism $i\colon X\iso X'$ with $\Reactions(i)(R)=R'$ and $w=(i\times A)\then w'$.
The symmetric monoidal structure is the same as $\Smooth$ on objects, while on 1-cells $(X,R,w):A \to B$ and $(Y,S,v):C \to D$ is given by
\begin{equation*}
  (X, R, w) \otimes (Y,S,v) := (X \times Y, R \oplus S, (A \times C) \times (X \times Y) \To{\sim} (X \times A) \times (Y \times C) \To{w \times v} B \times D).
\end{equation*}
The fact that this symmetric monoidal structure is well-defined on the local 0-truncation is proven in~\cite{hermida_monoidal_2012}.

Compared to energy-driven systems, reaction systems lack the data of an energy functional.
We add this by considering such a functional as an effect. In fact $(\real,0,+)$ is a monoid in $\OpenReact$, because it is a monoid in $\Smooth$ and $\Smooth$ embeds into $\OpenReact$, and thus $(-) \times \real$ is a monad on $\OpenReact$.

\begin{definition}
  Let $\OpenPot$ be the symmetric monoidal category of \emph{open energy-driven systems}, defined as the Kleisli category of $(-) \times \real$ on $\OpenReact$.
\end{definition}

A 1-cell in $\OpenPot$ from $A$ to $B$ is then a state space $X$ with a reaction $R \in \Reactions(X)$ and a smooth function $\langle w, E \rangle : A \times X \to B \times \real$, which is precisely an open energy-driven system as in~\cref{def.open_ed_system}.

The composition of $(X,R, \langle w, E \rangle:A \times X \to B \times \R)$ and $(X',R', \langle w', E' \rangle:B \times X' \to C \times \R)$ is the energy-driven system:
\begin{equation}
  (X \times X', R \oplus R', \langle w \then w', E + w^*E' \rangle :A \times (X \times X') \to C \times \R)
\end{equation}
where $w^*E' : A \times (X \times X') \to \R$ is given by $(a,x,x') \mapsto E'(w(a,x),x')$ and we abuse notation by writing $E$ for $\pi_{A,X} \then E$.
The symmetric monoidal structure is defined similarly as above.

\begin{remark}\label{rmk:dep-para}
  There are more natural higher-category structures that we could use instead of just symmetric monoidal categories. For instance, the Para construction naturally produces a double category.
  In fact the generalized Para construction of~\cite{myers_Para_2023,capucci_Constructing_2023} can even define $\OpenPot$ in one fell swoop by having the component $E:A \times X \to \real$ be part of the decoration on the states.
\end{remark}

\section{The Semantics of Open Energy-driven Systems}\label{sec:semantics}

In order to develop semantics for open energy-driven systems, we must have some notion of an open ODE. It is the job of the current section to construct this.

\subsection{Open Ordinary Differential Equations}

\begin{definition}
  A smooth function $p \colon \bar X \epito X$ is a \emph{submersion} if all $(Tp)_{\bar x} \colon T_{\bar x} \bar X \to T_{p(\bar x)} X$ are surjective.
\end{definition}

\begin{proposition}
  If $p \colon \bar X \epito X$ is a submersion, then all pullbacks along $p$ exist, and are computed as in the category of topological spaces.
\end{proposition}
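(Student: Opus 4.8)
The plan is to show that for a submersion $p\colon \bar X \epito X$ and any smooth map $f\colon Y \to X$, the pullback $\bar X \times_X Y$ exists as a smooth manifold and agrees with the topological pullback. The key fact to invoke is the local normal form for submersions: by the implicit function theorem (or rank theorem), every submersion is locally, up to diffeomorphism, a projection $\real^{m} \times \real^{k} \to \real^{m}$. So around each point $\bar x \in \bar X$ there are charts in which $p$ looks like $(u,v) \mapsto u$.

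First I would recall the topological pullback $P = \{(\bar x, y) \in \bar X \times Y \mid p(\bar x) = f(y)\}$ with its subspace topology and the two projections. The goal is to exhibit a smooth atlas on $P$ making the projections smooth and satisfying the universal property in $\Smooth$. Second, I would work locally: fix $(\bar x_0, y_0) \in P$, choose a submersion chart for $p$ near $\bar x_0$ identifying a neighborhood with $V \times W \subseteq \real^m \times \real^k$ so that $p$ becomes the first projection, and choose a chart for $Y$ near $y_0$ with $f$ landing in the $\real^m$ factor; then locally $P$ is identified with $\{((u,v), z) \mid u = f(z)\} \cong W \times (\text{chart of } Y)$, visibly a smooth manifold of dimension $\dim Y + (\dim \bar X - \dim X)$, and the homeomorphism onto an open subset of $P$ (this uses that $p$ is open, which submersions are) makes these into charts. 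Third, I would check that the transition maps between two such charts are smooth — this is routine since they are built from transition maps of the ambient charts and the smooth map $f$. Fourth, with the manifold structure in hand, the two projections $P \to \bar X$ and $P \to Y$ are smooth in these coordinates, and given any manifold $Z$ with smooth maps $g\colon Z \to \bar X$, $h\colon Z \to Y$ with $g\then p = h \then f$, the unique continuous map $Z \to P$ is smooth because it is so in local coordinates (again reading off the chart description). That the set-level/topological pullback computation is the right one is then automatic, since we built the manifold on top of the topological pullback.

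The main obstacle is the local argument: making precise that a submersion chart for $p$ can be chosen compatibly with a chart for $Y$ adapted to $f$, so that the fiber product becomes a graph and hence manifestly a manifold. This is where the submersion hypothesis is essential — without it the set $P$ can fail to be a manifold or even locally Euclidean — and one must be slightly careful that the chart neighborhoods can be shrunk so that $p$ restricted to them is genuinely the model projection (surjective onto the relevant open set), which is what guarantees the local chart for $P$ is a homeomorphism onto an \emph{open} subset and that the charts cover $P$. Everything else — smoothness of transitions, smoothness of projections, uniqueness and smoothness of the mediating map, Hausdorffness and second-countability of $P$ as a subspace of the product — is a routine verification. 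I would also remark that the same argument shows the pullback projection $P \epito Y$ is again a submersion, so submersions are stable under pullback, which is the form in which this fact is typically used later.
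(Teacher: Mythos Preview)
Your sketch is correct and is essentially the standard argument: use the rank theorem to put $p$ in projection form, read off charts on the set-theoretic fiber product, and verify smoothness of transitions and of the mediating map. The paper, however, does not give any argument at all; its entire proof is the sentence ``Standard, can be found in \cite[Corollary~I.2.19]{kolar_Natural_1993}.'' So you have supplied precisely the content the paper delegates to that reference, and your bonus observation that the projection $P \epito Y$ is again a submersion is indeed the form in which the result is used downstream (it is what makes $\Subm$ a well-defined indexed category under pullback).
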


\begin{proof}
  Standard, can be found in \cite[Corollary~I.2.19]{kolar_Natural_1993}.
\end{proof}

\begin{definition}
  Let $\Subm \colon \Smooth^\op \to \Cat$ be the pseudofunctor which sends a space $X$ to the category of submersions $\bar{X}\to X$ over it, and a smooth function $f \colon X \to Y$ to its action by pullback $f^\ast \colon \Subm(Y) \to \Subm(X)$.
\end{definition}

\begin{definition}
  Let $\SmoothLens$ be the symmetric monoidal category of \textbf{$\Subm$-lenses}, defined in~\cite{spivak_Generalized_2019} as $\int \Subm^\op$, meaning its objects are given by submersions $p \colon \bar{X} \epito X$ and denoted by $\lens{\bar{X}}{X}$ and its 1-cells, denoted as below left, are given by pairs of dashed arrows as below right:
  \begin{equation}
  \lens{f^\sharp}{f} : \lens{\bar X}{X} \lensto \lens{\bar Y}{Y}
  \qquad
  \begin{tikzcd}[ampersand replacement=\&,sep=scriptsize]
    {\bar X} \& {f^*\bar Y} \& {\bar Y} \\
    X \& X \& Y
    \arrow["f"', dashed, from=2-2, to=2-3]
    \arrow["p"', two heads, from=1-1, to=2-1]
    \arrow["q", two heads, from=1-3, to=2-3]
    \arrow[Rightarrow, no head, from=2-1, to=2-2]
    \arrow[two heads, from=1-2, to=2-2]
    \arrow[from=1-2, to=1-3]
    \arrow["\lrcorner"{anchor=center, pos=0.125}, draw=none, from=1-2, to=2-3]
    \arrow["{f^\sharp}"', dashed, from=1-2, to=1-1]
  \end{tikzcd}
  \end{equation}
  The symmetric monoidal product of this category is given by fibrewise product of bundles.
\end{definition}

The reader might notice that the form of these $\Subm$-lenses is exactly that of the pullback map on covectors induced between the cotangent bundles by any smooth map, i.e.~if $f:X \to Y$ is a smooth map there is a corresponding lens
\begin{equation*}
  \begin{tikzcd}[ampersand replacement=\&,sep=scriptsize]
    {T^*X} \& {f^*T^*Y} \& {T^*Y} \\
    X \& X \& Y
    \arrow["f"', dashed, from=2-2, to=2-3]
    \arrow["p"', two heads, from=1-1, to=2-1]
    \arrow["q", two heads, from=1-3, to=2-3]
    \arrow[Rightarrow, no head, from=2-1, to=2-2]
    \arrow[two heads, from=1-2, to=2-2]
    \arrow[from=1-2, to=1-3]
    \arrow["\lrcorner"{anchor=center, pos=0.125}, draw=none, from=1-2, to=2-3]
    \arrow["{T^*f}"', dashed, from=1-2, to=1-1]
  \end{tikzcd}
\end{equation*}
In fact the assignment $f \mapsto \lens{T^*f}{f}$ defines a functor $T^*: \Smooth \to \SmoothLens$, which will be central later.

We want to interpret an open energy-driven system on $A, B$ with state space $X$ as a parametric lens:
\[ \lens{T^\ast A}{A} \otimes \lens{T^* X}{X} \lensto \lens{T^\ast B}{B} \]
where the parameter space $X$ is decorated by the data of a reaction, seen as an open ODE $\lens{TX}{X} \lensto \lens{T^*X}{X}$.
We do this with a similar Para construction as before, though to tame coherence we directly construct the (op)fibration of $\pi_\ODE:\ODE \to \SmoothLens$ by means of an isocomma construction:

\begin{construction}\label{const:ode}
  Let $T : \Smoothiso \to \SmoothLens$ be the tangent bundle functor restricted to the core of $\Smooth$, i.e.~its wide subcategory of isomorphisms.
  Consider the isocomma $\ODE := \{T \cong \SmoothLens\}$, performed in the 2-category of symmetric monoidal categories and symmetric strong monoidal functors (which exists well-defined by e.g.~\cite{blackwell_two-dimensional_1989}).
  This is itself a symmetric monoidal category, opfibred over $\SmoothLens$, with fibers given by the groupoids:
  \begin{equation}
      \ODE\lens{\bar Y}{Y} = \left\{ X \in \Smooth, \lens{u^\sharp}{u} : \lens{TX}{X} \lensto \lens{\bar Y}{Y} \right\},
  \end{equation}
  with morphisms given by isomorphisms $\varphi:X \iso X'$ between the state spaces such that $\lens{T\varphi^{-1}}{\varphi} : \lens{TX}{X} \lensto \lens{TX'}{X'}$ commutes with the dynamics on $X$ and $X'$.
  The symmetric monoidal structure is inherited from that of $\SmoothLens$, and relies crucially on the fact that $T : \Smoothiso \to \SmoothLens$ is a strong monoidal functor.
\end{construction}


The projection $\pi_\ODE:\ODE \to \SmoothLens$ induces an action of the former on the latter, which we denote $\ActionODE$.

\begin{definition}
  We call $\OpenODE$ the local 0-truncation of $\Para(\ActionODE)$.
\end{definition}

Thus a map $\lens{\bar A}{A} \to \lens{\bar B}{B}$ in $\OpenODE$ is a choice of parameter interface $\lens{\bar P}{P}$ and of an open ODE over it, say $\lens{u^\sharp}{u} : \lens{TX}{X} \lensto \lens{\bar P}{P}$, and then a choice of smooth lens $\lens{\bar A}{A} \otimes \lens{\bar P}{P} \lensto \lens{\bar B}{B}$.
At this stage, the open ODE on the parameter is nothing more than a decoration.

\subsection{An ``organized'' view}
In the symmetric monoidal category $\OpenODE$, systems (the ODEs) and their wiring (the parametric lens they are grafted on) are kept neatly separated.
This is because, in general, the systems might be very different from their wiring, but in this case they aren't: both open ODEs and their wiring are differential lenses, so one can collapse the data of the ODE directly in the wiring, seeing it as all part of a unique process.
This brings us to define $\COrg$, which is a smooth space, continuous time variant of $\dblOrg$, introduced in~\cite{spivak_Learners_2022}; albeit we confine ourselves to defining a 1-category and not a full double categiry like $\dblOrg$ is.

Once again, we define an action through a symmetric monoidal functor, this time given by the tangent bundle functor $T\colon\Smoothiso\to\SmoothLens$ sending $X\mapsto\lens{TX}{X}$. This defines an action of $\Smoothiso$ on $\SmoothLens$, which we denote $\ActionT$.

\begin{definition}
  We call $\COrg$ the local 0-truncation of $\Para(\ActionT)$.
\end{definition}

The categories $\OpenODE$ and $\COrg$ have the same objects, namely submersions $\lens{\bar{A}}{A}$, but a 1-cell from $\lens{\bar{A}}{A}$ to $\lens{\bar{B}}{B}$ in $\COrg$ consists of a manifold $X$ (up to diffeomorphism) and a lens $\lens{\bar{A}}{A}\otimes\lens{TX}{X}\lensto\lens{\bar{B}}{B}$.

\newcommand{\collapse}{{\mathsf{collapse}}}
\begin{proposition}\label{prop:collapse}
  There is an identity-on-objects, symmetric monoidal functor
  \begin{equation*}
    \collapse : \OpenODE \longto \COrg
  \end{equation*}
  given by
  \begin{equation*}
    \begin{tikzcd}[ampersand replacement=\&,row sep=3.5ex]
      \&[-7.5ex]{\lens{TX}{X}} \\
      {\lens{\bar{A}}{A}} \& {\lens{\bar P}{P}} \& {\lens{\bar{B}}{B}}
      \arrow["\otimes"{marking, allow upside down}, draw=none, from=2-1, to=2-2]
      \arrow["f"', shift right, from=2-2, to=2-3]
      \arrow["{f^\sharp}"', shift right, from=2-3, to=2-2]
      \arrow[shift right, "u"', from=1-2, to=2-2]
      \arrow["u^\sharp"', shift right, from=2-2, to=1-2]
    \end{tikzcd}
    \mapsto
    \begin{tikzcd}[ampersand replacement=\&,row sep=-2ex]
      {\lens{TX}{X}} \& {\lens{\bar P}{P}} \\
      \&\& {\lens{\bar{B}}{B}} \\
      {\lens{\bar{A}}{A}} \& {\lens{\bar{A}}{A}}
      \arrow[""{name=0, anchor=center, inner sep=0}, "\otimes"{marking, allow upside down}, draw=none, from=3-2, to=1-2]
      \arrow["\otimes"{marking, allow upside down}, draw=none, from=3-1, to=1-1]
      \arrow[Rightarrow, no head, from=3-1, to=3-2]
      \arrow["u"', shift right, from=1-1, to=1-2]
      \arrow["{u^\sharp}"', shift right, from=1-2, to=1-1]
      \arrow["{{f^\sharp}}"'{pos=0.3}, shift right, shorten >=10pt, from=2-3, to=0]
      \arrow["f"'{pos=0.7}, shift right, shorten <=10pt, from=0, to=2-3]
    \end{tikzcd}
  \end{equation*}
\end{proposition}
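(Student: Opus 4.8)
The plan is to define $\collapse$ on objects as the identity (legitimate, since both $\OpenODE$ and $\COrg$ have the submersions of $\SmoothLens$ as objects) and, on a morphism of $\OpenODE$---a parameter interface $\lens{\bar X}{X}$ carrying an open ODE $u\colon\lens{TX}{X}\lensto\lens{\bar X}{X}$ together with a wiring lens $f\colon\lens{\bar A}{A}\otimes\lens{\bar X}{X}\lensto\lens{\bar B}{B}$---to output the $\COrg$-morphism whose parameter is the manifold $X$ and whose wiring is the composite lens $\lens{\bar A}{A}\otimes\lens{TX}{X}\xrightarrow{\lens{\bar A}{A}\otimes u}\lens{\bar A}{A}\otimes\lens{\bar X}{X}\xrightarrow{f}\lens{\bar B}{B}$, exactly as in the displayed picture. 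With this in hand, the work is to check that the assignment (i) descends to the local-skeleton quotients, (ii) is functorial, and (iii) is symmetric monoidal.

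For (i), two representatives of the same morphism of $\OpenODE$ differ by an isomorphism of $\IntODE$, which unwinds to an isomorphism $\phi\colon\lens{\bar X}{X}\To{\iso}\lens{\bar X'}{X'}$ in $\SmoothLens$ with $u\then\phi=u'$ and $f=(\lens{\bar A}{A}\otimes\phi)\then f'$; substituting gives $(\lens{\bar A}{A}\otimes u)\then f=(\lens{\bar A}{A}\otimes u')\then f'$, so the two outputs coincide on the nose. (In fact $\collapse$ sends every reparametrisation $2$-cell to an identity, which is the structural reason it descends.) For (ii), the unit of $\IntODE$ is the trivial ODE on the monoidal unit, so identities go to identities; for composition I would invoke that the monoidal structure on $\ODE$---hence on $\IntODE$---composes open ODEs in parallel via the strong monoidality of $T$, using the \emph{same} identification $\lens{T(X\times Y)}{X\times Y}\cong\lens{TX}{X}\otimes\lens{TY}{Y}$ that underpins the action $\ActionT$ defining $\COrg$. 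A one-line application of the interchange law in $\SmoothLens$ then shows that collapsing the $\OpenODE$-composite of $(\lens{\bar X}{X},u,f)$ and $(\lens{\bar Y}{Y},v,g)$ equals the $\COrg$-composite of their collapses. The same interchange computation gives (iii): $\collapse$ acts factor-wise, so it preserves $\otimes$ on morphisms and the braiding, and together with identity-on-objects this makes it symmetric monoidal.

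A cleaner way to organise all of (i)--(iii) at once, and the one I would actually write up, is to exhibit $\collapse$ as an instance of the functoriality of $\Para$. There is a strict symmetric monoidal functor $\mathrm{state}\colon\IntODE\to\Smoothiso$ sending $(\lens{\bar X}{X},u)\mapsto X$ and every morphism to an identity---morphisms of $\IntODE$ fix the underlying state manifold---together with a monoidal natural transformation $T\circ\mathrm{state}\Rightarrow\pi_\ODE$ whose component at $(\lens{\bar X}{X},u)$ is the ODE $u$ itself: its naturality is precisely the equation $u\then\phi=u'$, and its monoidality is precisely the parallel-composition structure of $\ODE$. This pair is a morphism of the monoidal actions $\ActionODE$ and $\ActionT$ on $\SmoothLens$, and $\collapse$ is the (identity-on-objects, symmetric monoidal) functor it induces on the $\Para$-categories, after passing to local skeleta.

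I expect the only real obstacle to be coherence bookkeeping. The construction interleaves the strong-monoidality constraints of $T$, the monoidal structure of $\ODE$, and the associators, unitors and braidings of $\SmoothLens$, and one must check these fit together well enough that, once the local-skeleton quotient turns coherence isomorphisms into equalities, the strict identities used in (ii) and (iii) genuinely hold---and, relatedly, that the left/right placement of the parameter in the two $\Para$-constructions is chosen consistently so the collapsed wiring lands where expected. None of this is deep, but that is where the care goes.
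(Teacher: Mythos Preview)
Your direct argument in (i)--(iii) is correct and is precisely the paper's approach: the authors' proof is a one-sentence sketch that the functoriality and monoidality ``boil down to the interchange law of the monoidal structure of $\SmoothLens$'', which is exactly what you unwind in (ii) and (iii); your treatment of (i) and of the coherence issues simply fills in detail the paper omits.

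Your second packaging---realising $\collapse$ as $\Para$ applied to a morphism of actions $(\mathrm{state},u)\colon\ActionODE\Rightarrow\ActionT$---is not in the paper but is in its spirit (cf.\ the paper's remarks about the generalised $\Para$ construction and \cite{capucci_Constructing_2023}). Your observation that morphisms of $\IntODE$ fix the state manifold, so $\mathrm{state}$ is well-defined and the ODE $u$ is natural, is correct and is the one nontrivial check. This route buys you all of (i)--(iii) at once and makes the coherence bookkeeping someone else's problem (namely, whoever proved functoriality of $\Para$); the paper's bare interchange argument is shorter to state but leaves more to the reader.
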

\begin{proof}
  The fact this is functorial and symmetric monoidal boils down to the interchange law of the monoidal structure of $\SmoothLens$, which allows to map composition in $\OpenODE$ (which puts the ODEs in the parameter side by side) to composition in $\COrg$ (which composes sequentially the 1-cells).
\end{proof}

In fact, this definition applies already to discrete systems, i.e.~to (the horizontal 1-category of) $\dblOrg$, which we denote as $\Org$.
Recall this 1-category is the symmetric monoidal category whose objects are polynomial functors $p \in \Poly$ and whose 1-cells $p \to q$ are parametric maps of polynomials (i.e.~parametric dependent lenses) of the form $p \otimes Sy^S \to q$, thus where the parameter is given by something of the form $Sy^S$ (\emph{state systems} in the language of~\cite{niu_polynomial_2023}), and these compose in a Para way.

Starting with $\Poly$, define a functor $\Coalg \colon \Poly \to \Set$ which sends a polynomial to its (large) set of coalgebras. Applying the Para construction to the forgetful functor $\int \Coalg \to \Poly$, we get something similar to $\OpenODE$, where a 1-cell from $p$ to $q$ is a polynomial $r$, a coalgebra $\delta:S \to r(S)$, and a poly map $\lens{f^\sharp}{f}:p \otimes r \to q$.
Now, the coalgebra $\delta$ is in fact equivalent to a map $\hat\delta : Sy^S\to r$, and thus we can define a symmetric monoidal functor $\Para(\otimes_\Coalg) \to \Org$ which collapses the pair $(\delta, \lens{f^\sharp}{f})$ to a single map $p \otimes Sy^S \to q$.

Generally speaking, this trick works whenever the indexed set of systems $\Sys : \cat C \to \Set$ is `free' in a specific sense: its elements are the objects of the slice $T/\cat C$, there T is a functor $T:\cat{States} \to \cat C$ which picks out ``state spaces'',\footnotemark~not unlike what we did in~\cref{const:ode}.%
\footnotetext{Being a ``state space'' is an attitude, not a formal mathematical concepts: any functor into $\cat C$ suffices.}
In that case we can always build a collapse functor $\Para(\otimes_\Sys) \to \Para(\otimes_T)$ by reproducing the construction of~\cref{prop:collapse}.
Systems which are given by slicing under a functor play a central role in~\cite{myers_Categorical_2023}, where Myers shows most theories of systems can be obtained in this way.








\subsection{The Cotangent Functor}
The aim of this section is to show how the cotangent functor $T^* : \Smooth \to \SmoothLens$ induces a functor $\OpenPot \longto \OpenODE$:
\begin{equation}\label{eq:semantics}
  \begin{tikzcd}[ampersand replacement=\&,row sep=1.8ex, column sep=4ex]
    \&[-2.5ex] R \&[-4ex] {\Reactions(X)} \&[-2ex] \\
    A \& X \&\& B \times \real
    \arrow["\times"{marking, allow upside down}, draw=none, from=2-1, to=2-2]
    \arrow["{{\langle w, E \rangle}}", from=2-2, to=2-4]
    \arrow["\in"{marking, allow upside down}, draw=none, from=1-2, to=1-3]
  \end{tikzcd}
  \mapsto
  \begin{tikzcd}[ampersand replacement=\&,row sep=3.5ex]
    \&[-7.5ex] {\lens{TX}{X}} \&[-8ex] {\ODE\lens{T^*X}{X}} \&[-2ex] \\
    {\lens{T^*A}{A}} \& {\lens{T^*X}{X}} \&\& {\lens{T^*B}{B}}
    \arrow["\times"{marking, allow upside down}, draw=none, from=2-1, to=2-2]
    \arrow["w"', shift right, from=2-2, to=2-4]
    \arrow["{T^*w + \dd E}"', shift right, from=2-4, to=2-2]
    \arrow[shift right, Rightarrow, no head, from=1-2, to=2-2]
    \arrow["R"', shift right, from=2-2, to=1-2]
    \arrow["\in"{marking, allow upside down}, draw=none, from=1-2, to=1-3]
  \end{tikzcd}
\end{equation}

The functor could be obtained in one single step using the technology of the generalized Para construction we hinted at in~\cref{rmk:dep-para} with which we can see both $\OpenPot$ and $\OpenODE$ as Para constructions and thus induce the desired functor by exhibiting one between the underlying fibred actions (see~\cite{capucci_Constructing_2023}).
Since we don't have the space to introduce this machinery, we just give a direct construction.

This would make the analogy with~\cite{capucci_Diegetic_2023} total since that is how the functor from game descriptions to parametric lenses is obtained. Notably, in this case $T^*$ is strong monoidal thus making the semantics of open energy-driven systems truly compositional.


\newcommand{\T}{{\mathbf T}}
\begin{theorem}
  The assignment defined in~\eqref{eq:semantics} is a well-defined symmetric monoidal functor
  \[\T^*:\OpenPot \longto \OpenODE.\]
\end{theorem}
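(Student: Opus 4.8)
The plan is to verify directly that \eqref{eq:semantics} respects identities, composition, and the monoidal structure; as \cref{rmk:dep-para} indicates, one could alternatively present $\T^*$ as the image under the generalized Para construction of a morphism of fibred actions built out of $T^*\colon\Smooth\to\SmoothLens$, but without that machinery we argue by hand, following the pattern of \cref{prop:collapse}. First one checks the output is well-typed: on objects $A\mapsto\lens{T^*A}{A}$ is legitimate since the cotangent projection is a submersion, and on a morphism $(X,R,\langle w,E\rangle)$ the prescribed parameter interface $\lens{T^*X}{X}$ decorated by $\lens{R}{\id_X}\colon\lens{TX}{X}\lensto\lens{T^*X}{X}$ is exactly an element of $\ODE\lens{T^*X}{X}$ with state space $X$ --- a reaction reread as an open ODE --- while, using that $T^*$ is strong monoidal so that $\lens{T^*A}{A}\otimes\lens{T^*X}{X}\cong\lens{T^*(A\times X)}{A\times X}$, the pair consisting of the forward map $w$ and the backward map $T^*w+\dd E$ is a bona fide $\Subm$-lens, with $T^*w$ the fibrewise-linear covector pullback along $w$ and $\dd E$ a section of $T^*(A\times X)$ added to it.

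Next one checks invariance under the local-skeleton quotient, so that $\T^*$ is well defined on morphisms of $\OpenPot$. If a diffeomorphism $i\colon X\to X'$ witnesses an identification in $\OpenPot$ --- meaning $\Reactions(i)(R)=R'$, $w=(A\times i)\then w'$ and $E=(A\times i)\then E'$ --- then $\lens{T^*i}{i}$ is an isomorphism in $\SmoothLens$; functoriality of $T^*$ turns the first equation into the statement that $\lens{T^*i}{i}$ intertwines the two reaction-ODEs across \cref{def.react}, and the chain rule $\dd((A\times i)\then E')=(A\times i)^*\dd E'$ together with naturality of $T^*w$ turns the other two into the statement that it intertwines the two wiring lenses. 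Hence the two images coincide in the local skeleton of $\Para(\ActionODE)$.

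For functoriality, the identity on $A$ in $\OpenPot$ is the Kleisli unit $(\real^0,\,\text{(unique reaction)},\,\langle\pi_A,0\rangle)$, whose image has trivial ODE and wiring $\lens{T^*\pi_A}{\pi_A}$, i.e.\ the identity lens on $\lens{T^*A}{A}$ after the unitor $\lens{T^*(A\times\real^0)}{A\times\real^0}\cong\lens{T^*A}{A}$. For composition, take $(X,R,\langle w,E\rangle)\colon A\to B$ and $(X',R',\langle w',E'\rangle)\colon B\to C$, with $\OpenPot$-composite $(X\times X',\,R\oplus R',\,\langle w\then w',\,E+w^*E'\rangle)$. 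Unfolding the $\Para(\ActionODE)$-composite of the two images: the parameter interfaces tensor to $\lens{T^*X}{X}\otimes\lens{T^*X'}{X'}\cong\lens{T^*(X\times X')}{X\times X'}$ and the decorating ODE is the $\ODE$-monoidal product of $\lens{R}{\id}$ and $\lens{R'}{\id}$, which by \cref{prop:reactions-fun} --- the compositor $\oplus$ of $\Reactions$ being exactly the monoidal structure of $\ODE$ transported along $T$ --- is $\lens{R\oplus R'}{\id}$, matching. On the wiring, the forward legs compose to $w\then w'$; for the backward legs one composes $T^*w'+\dd E'$ with ($T^*w+\dd E$ on the $T^*B$-slot, identity on the $T^*X'$-slot) and then invokes the chain rule $\dd(E+w^*E')=\dd E+(w\times\id_{X'})^*\dd E'$ and the cotangent functoriality $T^*(w\then w')=(w\times\id_{X'})^*T^*w'\then T^*(w\times\id_{X'})$, where $T^*(w\times\id_{X'})$ is $T^*w$ on the $B$-factor and the identity on the $X'$-factor; separating the terms linear in the incoming covector from the constant $\dd E'$-terms shows the result is precisely $T^*(w\then w')+\dd(E+w^*E')$, the backward leg assigned to the composite.

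Finally, both monoidal products are computed slot by slot, so the structure isomorphisms of $\T^*$ are inherited from the (invertible) laxators of the strong symmetric monoidal functor $T^*$, together with additivity of $\dd$ --- $\dd(E_1+E_2)$ being $\dd E_1$ and $\dd E_2$ placed in their respective slots under the strong-monoidality isomorphism --- and coherence and compatibility with braidings reduce to the corresponding properties of $T^*$. The main obstacle is the composition step: one must carefully disentangle the fibrewise-\emph{linear} cotangent-pullback contribution from the \emph{affine} $\dd E$ contribution when composing lenses in $\Para(\ActionODE)$, and confirm that the ODE decorations recombine via the compositor $\oplus$ of \cref{prop:reactions-fun}; everything else is bookkeeping resting on strong monoidality of $T^*$ and the chain rule for $\dd$.
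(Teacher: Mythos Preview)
Your proposal is correct and follows essentially the same direct-verification approach as the paper: both unwind the $\Para(\ActionODE)$-composite of two images and reduce the backward-map comparison to functoriality of $T^*$ plus the chain rule $\dd(E+w^*E')=\dd E+(w\times\id_{X'})^*\dd E'$, with monoidality inherited from strong monoidality of $T^*$. You additionally spell out well-typedness and invariance under the local-skeleton quotient, which the paper leaves implicit, but these are straightforward checks rather than a different strategy.
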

\begin{proof}
  It is easy to see it sends identities to identities.
  Given composable open energy-driven systems $(X,R, \langle w, E \rangle:A \times X \to B \times \R)$ and $(X',R', \langle w', E' \rangle:B \times X' \to C \times \R)$, their images are the composable open ODEs
  \begin{equation*}
    \begin{tikzcd}[ampersand replacement=\&,row sep=3.5ex]
      \&[-7.5ex]{\lens{TX}{X}} \\
      {\lens{T^*A}{A}} \& {\lens{T^*X}{X}} \& {\lens{T^*B}{B}}
      \arrow["\otimes"{marking, allow upside down}, draw=none, from=2-1, to=2-2]
      \arrow["w"', shift right, from=2-2, to=2-3]
      \arrow["{{T^*w + \dd E}}"', shift right, from=2-3, to=2-2]
      \arrow[shift right, Rightarrow, no head, from=1-2, to=2-2]
      \arrow["R"', shift right, from=2-2, to=1-2]
    \end{tikzcd}
    {\Huge\then}
    \begin{tikzcd}[ampersand replacement=\&,row sep=3.5ex]
      \&[-7.5ex]{\lens{TX'}{X'}} \\
      {\lens{T^*B}{B}} \& {\lens{T^*X'}{X'}} \& {\lens{T^*C}{C}}
      \arrow["\otimes"{marking, allow upside down}, draw=none, from=2-1, to=2-2]
      \arrow["w'"', shift right, from=2-2, to=2-3]
      \arrow["{{T^*w' + \dd E'}}"', shift right, from=2-3, to=2-2]
      \arrow[shift right, Rightarrow, no head, from=1-2, to=2-2]
      \arrow["R'"', shift right, from=2-2, to=1-2]
    \end{tikzcd}
  \end{equation*}
  which reduces to
  \begin{equation*}
    \begin{tikzcd}[ampersand replacement=\&,row sep=3.5ex]
      \&[-7.5ex]{\lens{T(X \times X')}{X \times X'}} \\
      {\lens{T^*A}{A}} \& {\lens{T^*(X \times X')}{X \times X'}} \& {\lens{T^*B}{B} \otimes \lens{T^*X'}{X'}} \& {\lens{T^*C}{C}}
      \arrow["\otimes"{marking, allow upside down}, draw=none, from=2-1, to=2-2]
      \arrow["w"', shift right, from=2-2, to=2-3]
      \arrow["{{T^*w + \dd E}}"', shift right, from=2-3, to=2-2]
      \arrow[shift right, Rightarrow, no head, from=1-2, to=2-2]
      \arrow["{R \oplus R'}"', shift right, from=2-2, to=1-2]
      \arrow["{w'}"', shift right, from=2-3, to=2-4]
      \arrow["{T^*w' + \dd E'}"', shift right, from=2-4, to=2-3]
    \end{tikzcd}
  \end{equation*}
  Given $a \in A, x \in X, x' \in X'$, the composite backward map sends a covector $\alpha \in T^*C$ (we omit indexing of bundles for brevity) to
  \begin{equation*}
    T^*w(T^*w'(\alpha) + \dd E'(w(a, x), x')) + \dd E(a, x)
    =
    T^*(w \then w')(\alpha) + \dd (w^*E' + E)(a, x, x').
  \end{equation*}
  The latter expression equals the image of the composite energy-driven system $(X \times X', R \oplus R', \langle w \then w', E + w^*E' \rangle :A \times (X \times X') \to C \times \R)$:
  \begin{equation*}
    \begin{tikzcd}[ampersand replacement=\&,row sep=3.5ex]
      \&[-7.5ex]{\lens{T(X \times X')}{X \times X'}} \\
      {\lens{T^*A}{A}} \& {\lens{T^*(X \times X')}{(X \times X')}} \&\& {\lens{T^*C}{C}}
      \arrow["\otimes"{marking, allow upside down}, draw=none, from=2-1, to=2-2]
      \arrow["{w \then w'}"', shift right, from=2-2, to=2-4]
      \arrow["{{T^*(w \then w') + \dd (E + w^*E')}}"', shift right, from=2-4, to=2-2]
      \arrow[shift right, Rightarrow, no head, from=1-2, to=2-2]
      \arrow["{R \oplus R'}"', shift right, from=2-2, to=1-2]
    \end{tikzcd}
  \end{equation*}
  Preservation of symmetric monoidal structure is trivial, as it amounts to no more than the analogue structure on $T^*$.
\end{proof}

We can further collapse the description of the dynamics of the system by folding the ODE into the map itself, thus landing in $\COrg$:
\begin{equation*}
  \OpenPot \nlongto{\T^*} \OpenODE \nlongto{\collapse} \COrg.
\end{equation*}

\section{Examples}\label{sec:examples}

We motivated the last three sections by saying that we were going to compose the pendulum with itself to get an $n$-fold pendulum. We do this now.

\begin{example}
  For intuition, observe the following diagram of the double pendulum. We will work out carefully the composition of two pendulums to create a double pendulum, and leave the iterated composition to the reader.

  \[
    \scalebox{0.7}{\begin{tikzpicture}
      \node[inner sep = 2pt, fill, black, circle] (x0) at (0, 0) {};
      \node[inner sep = 4pt, fill, black, circle, label=below:$m$] (x1) at (300:2.5cm) {};
      \node[inner sep = 4pt, fill, black, circle, label=below:$m$] (x2) at ($(x1) + (240:2.5cm)$) {};
      \coordinate (xaxis) at (3, 0);
      \coordinate (xaxis2) at ($(x1) + (3, 0)$);
      \draw[thick] (x0) -- node[above, outer sep=3pt] {$l$} (x1);
      \draw[thick] (x1) -- node[above, outer sep=3pt] {$l$} (x2);
      \draw[dotted] (x0) -- (xaxis);
      \draw[dotted] (x1) -- (xaxis2);
      \draw (0.5, 0) arc (0:300:0.5cm);
      \draw ($(x1) + (0.5, 0)$) arc (0:240:0.5cm);
      \node at (150:0.75cm) {$\theta_1$};
      \node at ($(x1) + (150:0.75cm)$) {$\theta_2$};
    \end{tikzpicture}}
  \]

  Recall that the single pendulum in $\OpenPot$ as defined in~\cref{ex:open_pendulum} is an endomorphism $T \real^2 \to T \real^2$, whose state space is $T^\ast S^1$ and whose reaction structure is derived from the canonical symplectic structure and written in coordinate form as $J(x) = \begin{bmatrix} 0 & 1 \\ -1 & 0 \end{bmatrix}$.

  Composing this morphism with itself involves several steps, where we unwind all of the constructions that we've done so far. First of all, we have to take the monoidal product of $(T^\ast S^1, J)$ with itself in $\int \Reactions$. This uses the lax monoidal structure of $\Reactions$ to produce a reaction $J^{(2)} = J \oplus J$ on $T^\ast S^1 \times T^\ast S^1$. In coordinates, this is the block-diagonal matrix

  \[ J^{(2)}(x) = \begin{bmatrix} J(x) & 0 \\ 0 & J(x) \end{bmatrix} \]

  Transforming this along the isomorphism $T^\ast S^1 \times T^\ast S^1 \cong T^\ast (S^1 \times S^1)$, this becomes the standard
  \[ J^{(2)}(x) = \begin{bmatrix} 0 & I_2 \\ -I_2 & 0 \end{bmatrix} \]

  This gives a new parameter space; we now compose the smooth maps $w \colon T \real^{2} \times TS^{1} \to T\real^{2}$ and $E \colon T\real^{2} \times TS^{1} \to \real$ with themselves to get $w^{(2)} \colon T\real^2 \times (T^\ast S^1 \times T^\ast S^1) \to T\real^2$ defined by
  \[ T\real^2 \times T^\ast S^1 \times T^\ast S \to[w \times \id_{T^\ast S^1}] T\real^2 \times T^\ast S^1 \to[w] T\real^2 \]
  and $E^{(2)} \colon T\real^2 \times (T^\ast S^1 \times T^\ast S^1) \to \real$ defined by
  \[ T\real^2 \times T^\ast S^1 \times T^\ast S \to[\langle E, w \times \id_{T^\ast S^1} \rangle] \real \times T\real^2 \times T^\ast S^1 \to[\id_\real \times E] \real \times \real \to[+] \real \]

  In coordinates, this looks like the following. Assume that $x_0, v_0$ are the position and velocity of the first pivot, and $\theta_1, L_1, \theta_2, L_2$ are the natural coordinates for $T^\ast S^1 \times T^\ast S^1$. Then we define the following quantities, all of which depend on $(x_{0}, v_{0}, \theta_{1}, L_{1}, \theta_{2}, L_{2}) \in T\real^{2} \times T^{\ast}S^{1} \times T^{\ast}S^{1}$,

  \begin{eqalign*}
    x_1 &= x_0 + l(\cos \theta_1, \sin \theta_1) \\
    \omega_1 &= \frac{L_1}{I} \\
    v_1 &= v_0 + l\omega_1(-\sin\theta_1, \cos \theta_1) \\
    x_2 &= x_1 + l(\cos \theta_2, \sin \theta_2) \\
    \omega_2 &= \frac{L_2}{I} \\
    v_2 &= v_1 + l\omega_2(-\sin\theta_2, \cos \theta_2),
  \end{eqalign*}
  in order to define the output $w^{(2)}$ and energy $E^{(2)}$.
  \begin{eqalign*}
    w^{(2)} &= (x_2, v_2) \\
    E^{(2)} &= \frac{1}{2}m \abs{v_1}^2 + \frac{1}{2}m \abs{v_2}^2 + mgh_1 + mgh_2.
  \end{eqalign*}

  We finally apply the functor $\OpenPot \to \COrg$ in order to get an open dynamical system out of this. The state variables of this dynamical system are $(\theta_{1}, L_{1}, \theta_{2}, L_{2})$. It is a not-terribly-interesting exercise in calculus to actually compute all of the partials for $w^{(2)}$ and $E^{(2)}$; the interested reader could run the following Julia script if they wanted to see the actual expressions.

  \begin{verbatim}
using Symbolics

@variables x0 y0 vx0 vy0 θ1 L1 θ2 L2
@variables l I m g

x1 = x0 + l * cos(θ1)
y1 = y0 + l * sin(θ1)
ω1 = L1 / I
vx1 = vx0 + l * ω1 * (- sin(θ1))
vy1 = vy0 + l * ω1 * cos(θ1)

x2 = x1 + l * cos(θ2)
y2 = x1 + l * sin(θ2)
ω2 = L2 / I
vx2 = vx1 + l * ω2 * (- sin(θ2))
vy2 = vy1 + l * ω2 * cos(θ2)

E = 0.5 * m * (vx1^2 + vy1^2) + 0.5 * m * (vx2^2 + vy2^2) + m * g * y1 + m * g * y2

dEdx0 = expand_derivatives(Differential(x0)(E))
dEdy0 = expand_derivatives(Differential(y0)(E))

dEdvx0 = expand_derivatives(Differential(vx0)(E))
dEdvy0 = expand_derivatives(Differential(vy0)(E))

dEdθ1 = expand_derivatives(Differential(θ1)(E))
dEdL1 = expand_derivatives(Differential(L1)(E))

dEdθ2 = expand_derivatives(Differential(θ2)(E))
dEdL2 = expand_derivatives(Differential(L2)(E))

dθ1dt = dEdL1
dL1dt = -dEdθ1

dθ2dt = dEdL2
dL2dt = -dEdθ2
  \end{verbatim}

\end{example}

\section{Epilogue}\label{sec:epilogue}

In this work, we have shown the structure of open energy-driven system, involving a reaction and an energy functional, elegantly subsumes both Hamiltonian and gradient-based systems.
Out of these, we have built a symmetric monoidal category $\OpenPot$ which maps to $\OpenODE$, a symmetric monoidal category of open ODEs, which itself maps to $\COrg$, whose 1-cells are lenses `evolving smoothly' according to a given state space.

While this may seem like a lot of work/abstraction for little gain compared to ``just doing physics'' in the way one might in a traditional classical mechanics course, the advantage of working out this theory is to produce a ``plug and play'' physics system, where a library of components can be intuitively composed and all of the algebra is done by the computer.


\bibliographystyle{eptcsalpha}
\bibliography{act-submission}

\end{document}